\title{On the Gromov--Witten/Donaldson--Thomas Correspondence and Ruan's Conjecture for Calabi-Yau $3$-Orbifolds}
\author{Dustin Ross}
\address{Dustin Ross, University of Michigan, Department of Mathematics, Ann Arbor, MI 48109, USA}
\email{dustyr@umich.edu}
\newcommand{\rk}{\textrm{rk}}
\newcommand{\bP}{\mathbb{P}}
\newcommand{\bZ}{\mathbb{Z}}
\newcommand{\aut}{\mathrm{Aut}}
\newcommand{\cO}{\mathcal{O}}
\newcommand{\bQ}{\mathbb{Q}}
\newcommand{\bC}{\mathbb{C}}
\newcommand{\cB}{\mathcal{B}}
\newcommand{\fq}{\mathfrak{q}}
\newcommand{\bq}{\mathbf{q}}
\newcommand{\cX}{\mathcal{X}}
\newcommand{\cN}{\mathcal{N}}
\newcommand{\cZ}{\mathcal{Z}}
\newcommand{\Mbar}{\overline{\mathcal{M}}}
\newcommand{\bE}{\mathbb{E}}
\newcommand{\cI}{\mathcal{I}}
\newtheorem*{crc}{Theorem 2: Crepant Resolution Conjecture}
\newtheorem*{gwdt}{Theorem 1: Orbifold GW/DT Correspondence}
\newtheorem{dummy}{}[section]
\newtheorem{lemma}[dummy]{Lemma}
\newtheorem{theorem}[dummy]{Theorem}
\newtheorem{Theorem}{Theorem}
\newtheorem*{corollary*}{Corollary 2}
\theoremstyle{definition}
\newtheorem{definition}[dummy]{Definition}
\newtheorem{remark}[dummy]{Remark}
\begin{document}


\begin{abstract}
For any toric Calabi-Yau $3$-orbifold with transverse $A$-singularities, we prove the Gromov--Witten/Donaldson--Thomas correspondence and Ruan's crepant resolution conjecture in all genera.
\end{abstract}

\maketitle

\section{Introduction}

\subsection{Summary of Results}

This paper addresses two fundamental questions in Gromov--Witten (GW) theory:
\begin{enumerate}
\item Ruan's crepant resolution conjecture (CRC) which states that the GW theory of an orbifold should be related to that of a crepant resolution, and
\item the Gromov--Witten/Donaldson--Thomas correspondence (GW/DT) of Maulik--Nekrasov--Okounkov--Pandharipande \cite{mnop:gwdti} which states that the GW theory of a Calabi-Yau (CY) threefold should be related to the Donaldson--Thomas theory of that same threefold.
\end{enumerate}

More specifically, for $\pi:W\rightarrow\cZ$ a crepant resolution of a hard-Lefschetz orbifold, Bryan--Cadman--Young \cite{bcy:otv} conjectured the following square of equivalences:
\[
\begin{CD}
GW(W) @= DT(W)\\
@| @|\\
GW(\cZ) @= DT(\cZ)
\end{CD}
\]
where all equalities consist of an identification of variables in the formal series, followed by analytic continuation.  This paper completes our understanding of the square of equivalences when $\cZ$ is a toric CY $3$-orbifold with transverse $A$-singularities (ie. the orbifold structure is cyclic and supported on disjoint torus lines).

Prior to this work, for $\cZ$ a toric CY $3$-orbifold with transverse $A$-singularities, the top equality was a theorem of Maulik--Oblomkov--Okounkov--Pandharipande \cite{moop:gwdtc} while the right equality was a theorem of the author \cite{r:dtcrc}. Partial results concerning the left and bottom equalities were obtained by Coates--Corti--Iritani--Tseng, Zhou, Brini--Cavalieri--Ross \cite{ccit:cgztgwi,z:crcag,bcr:craos} and Ross--Zong, Zong \cite{rz:ggmv,rz:chialsf,z:gmvf}, respectively. Building upon these partial results we prove the following (see Section \ref{sec:results} for precise statements).

\begin{gwdt}
Let $\cZ$ be a toric CY $3$-orbifold with transverse $A$-singularities. Let $GW(\cZ)$ denote the all genus, primary GW potential of $\cZ$ and let $DT(\cZ)$ denote the reduced, multi-regular DT potential of $\cZ$. Then there is an explicit identification of formal parameters so that
\[
GW(\cZ)=DT(\cZ)
\]
\end{gwdt}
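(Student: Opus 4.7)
The plan is to close the commutative square displayed in the introduction by square-chasing. Three of its four sides are already established: the equality $GW(W) = DT(W)$ is due to Maulik--Oblomkov--Okounkov--Pandharipande, the equality $DT(W) = DT(\cZ)$ is the main result of \cite{r:dtcrc}, and partial versions of the Gromov--Witten crepant resolution conjecture $GW(\cZ) = GW(W)$ are due to Coates--Corti--Iritani--Tseng, Zhou, and Brini--Cavalieri--Ross. The theorem therefore reduces to upgrading these partial results to a full CRC for any toric Calabi-Yau $3$-orbifold with transverse $A$-singularities; once this is done, composing the three known equalities around the square yields $GW(\cZ) = DT(\cZ)$.

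To complete the CRC I would proceed by toric localization. Applying Atiyah--Bott to the torus action on $\cZ$ and on $W$ expresses the two primary GW potentials as sums over decorated graphs, each contribution factoring into vertex terms and edge terms. The cited partial CRCs supply the required identity at each vertex via the local geometries $[\bC^3/\bZ_n]$ and their crepant resolutions, and the edge terms match directly after the prescribed change of variables. What remains is a gluing statement: the vertex identities are equalities of formal series that only agree after a prescribed analytic continuation in the orbifold K\"ahler variables, and one must check that these local continuations can be assembled into a single global continuation of the closed potentials.

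The main obstacle is precisely this compatibility. A priori the continuations demanded by different vertices need not coincide along the edges they share, so the gluing is nontrivial even when each local CRC is already known. The plan is to establish two gluing results: a \emph{regular gluing} proposition, showing that two adjacent vertices sharing an edge with trivial isotropy can be continued through a common parameter, and an \emph{orbifold gluing} proposition, controlling the interaction of local continuations along an edge carrying nontrivial cyclic isotropy. Together these stitch the vertex-level CRCs into a closed CRC for $\cZ$, after which square-chasing delivers the GW/DT correspondence.
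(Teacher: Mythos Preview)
Your proposal runs the square in the opposite direction from the paper. You want to establish the GW crepant resolution conjecture $GW(\cZ)=GW(W)$ first, by gluing vertex-level open CRCs, and then deduce $GW(\cZ)=DT(\cZ)$ by composing with the known top and right sides. The paper does exactly the reverse: it proves the orbifold GW/DT correspondence directly, and only afterwards derives the CRC as a corollary by composing with the smooth GW/DT correspondence and the DT CRC.

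Concretely, the paper's argument is purely local. It proves a vertex-level GW/DT correspondence (Theorem~\ref{thm:vertexcorrespondence}) for $[\bC^3/\bZ_n]$ by composing, at a single fixed point, the open CRC of Brini--Cavalieri--Ross, the smooth vertex GW/DT of Maulik--Oblomkov--Okounkov--Pandharipande, and the DT vertex CRC of \cite{r:dtcrc}. The global statement then follows from a gluing result already in the literature (Theorem~2.1 of \cite{rz:chialsf}), so no new gluing propositions are needed.

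The substantive difference is that the gluing step you flag as the ``main obstacle'' --- assembling local GW CRCs into a global one, with compatible analytic continuations along shared edges --- is genuinely unresolved in your outline: you identify it, propose two gluing propositions, but do not prove them. The paper sidesteps this entirely. By passing to DT at the vertex level before gluing, every series becomes a rational function in the $q$-variables, the analytic continuation issue evaporates (see the Remark after Theorem~2 in the introduction), and the global gluing is the already-established one from \cite{rz:chialsf}. So your route is not wrong in principle, but it leaves the hardest step as a promissory note, whereas the paper's reversed order of deductions turns that step into a citation.
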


Theorem \ref{thm:gwdt} completes the bottom equality in the square. Composing the bottom, right, and top equalities we deduce the following.

\begin{crc}
Let $\pi:W\rightarrow\cZ$ be the toric crepant resolution of $\cZ$.Then there is an explicit affine-linear identification of formal parameters so that \[GW(W)=GW(\cZ)\]
\end{crc}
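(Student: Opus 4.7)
The plan is to deduce Theorem 2 by chasing around the Bryan--Cadman--Young square: three of its four edges are already known, so the left edge $GW(W)=GW(\cZ)$ should follow by composition. Concretely, starting from $GW(\cZ)$, I would first apply the just-established Theorem 1 to identify $GW(\cZ)$ with $DT(\cZ)$ under the explicit change of variables it provides. Second, I would apply the author's earlier DT crepant resolution theorem \cite{r:dtcrc}, the right edge of the square, which gives an explicit identification of DT variables together with an analytic continuation matching $DT(\cZ)$ with $DT(W)$. Third, because $W$ is a smooth toric Calabi--Yau $3$-fold, I would invoke the Maulik--Nekrasov--Okounkov--Pandharipande GW/DT correspondence, proved in the toric setting by Maulik--Oblomkov--Okounkov--Pandharipande \cite{moop:gwdtc}, to pass from $DT(W)$ back to $GW(W)$.

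Composing these three identifications yields the desired equality $GW(W)=GW(\cZ)$. The remaining point is the shape of the resulting change of variables: Theorem 2 asserts it is affine-linear in the K\"ahler/Novikov parameters. This amounts to a bookkeeping calculation. Each edge contributes a substitution involving signs on Novikov parameters, orbifold roots of unity, and the substitution $-e^{iu}=q$ together with an analytic continuation. When the three substitutions are composed, the $q$ and $u$ variables cancel while the Novikov substitutions compose linearly, so the overall identification comes out affine-linear in the Novikov parameters as predicted by Ruan.

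The main obstacle I anticipate is logistical rather than geometric: ensuring that the analytic continuations used in the right and bottom edges can be composed on a common region of parameter space, and that the chambers matched by the crepant resolution correspond correctly under the intermediate identifications. This should follow from the explicit descriptions of the continuations already recorded in \cite{r:dtcrc} and in the proof of Theorem 1, but it is the step that requires the most care. Once that compatibility is in place, the square closes and Theorem 2 follows with no further geometric input.
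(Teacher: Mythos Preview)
Your proposal is correct and follows essentially the same route as the paper: the proof of Theorem~2 is exactly the composition of the smooth GW/DT correspondence of \cite{moop:gwdtc}, the DT crepant resolution result of \cite{r:dtcrc}, and Theorem~1. One minor point: your anticipated obstacle concerning analytic continuation turns out to be a non-issue here, since (as the paper remarks) all of the generating series involved can be written as rational functions in a suitable set of variables, so no analytic continuation is required and the compatibility check you worry about is automatic.
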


\begin{remark}
There is no need for analytic continuation in either of our theorems because the generating series can all be expressed as rational functions in an appropriate set of variables.
\end{remark}

\subsection{Outline of Proof}

Our methods employ the philosophy of the \emph{topological vertex} \cite{akmv:tv}. Essentially, the topological vertex asserts that the GW or DT theory of a toric CY $3$-fold can be reduced to the study of a local generating series defined at each torus fixed point. In GW theory, the local theory is a generating series of (orbifold) Hodge integrals (or \emph{open} GW invariants). In DT theory, the local theory is a generating series of piles of (colored) boxes. 

In \cite{rz:chialsf}, it was shown that Theorem \ref{thm:gwdt} could be reduced to a local correspondence on the level of the topological vertex. The main content of this paper is a proof of that local statement (Theorem \ref{thm:vertexcorrespondence}). The proof of Theorem \ref{thm:vertexcorrespondence} is rather roundabout, and relies heavily on previous work. Essentially, Theorem \ref{thm:vertexcorrespondence} is the composition of three previous results: the open CRC of Brini--Cavalieri--Ross \cite{bcr:craos}, the GW/DT correspondence of Maulik--Oblomkov--Okounkov--Pandharipande \cite{moop:gwdtc}, and the DT vertex CRC of the author \cite{r:dtcrc}.

Once Theorem \ref{thm:gwdt} is proved, Theorem \ref{thm:crc} follows quickly.

\subsection{Plan of Paper}

In Section \ref{sec:results}, we define the relevant GW and DT potentials and we give precise statements of Theorems \ref{thm:gwdt} and \ref{thm:crc}. In Section \ref{sec:local} we introduce the orbifold vertex both in GW and DT theory. These are generating series associated to each torus fixed point in $\cZ$. In Section \ref{sec:vertexgwdt}, we state and prove the GW/DT correspondence on the level of the orbifold vertex (Theorem \ref{thm:vertexcorrespondence}).

\subsection{Acknowledgements} 

The author is greatly indebted to his collaborators Andrea Brini, Renzo Cavalieri, and Zhengyu Zong. The current work relies heavily on their previous work together and he is extremely thankful for their encouragement in this project. The author has been supported by NSF RTG grants DMS-0943832 and DMS-1045119 and the NSF postdoctoral research fellowship DMS-1401873.

\section{Main Results}\label{sec:results}

In this section we give a precise statement of the main results. We begin by setting up some geometric notation.

\subsection{Generators for Cohomology}\label{sec:bases}

Let $\cZ$ be a toric CY $3$-orbifold with transverse $A$-singularities, ie. $\cZ$ is a smooth, quasi-projective, toric Deligne--Mumford stack of dimension three over $\bC$ with trivial canonical bundle and with cyclic isotropy supported on disjoint lines. Denote by $\{L_i\}$ the (not necessarily compact) singular lines in $\cZ$ where $L_i$ has isotropy group $\bZ_{n_i}$. Fix an isomorphism of $\bZ_{n_i}\cong\left\langle \xi_{n_i}:=e^{2\pi\sqrt{-1}/n_i}\right\rangle$. When $L_i$ is compact, also fix an integer $m_i$ so that  $\mathcal{N}_{L_i/\cZ}=\cO(m_i)\oplus\cO(-m_i-2)$ and the isotropy group acts with weight one on the fibers of the first factor. 

We will be interested in the primary GW invariants of $\cZ$. Because $\cZ$ is a CY $3$-fold, the string equation implies that the only nonvanishing primary invariants are those for which all cohomological insertions have Chen-Ruan degree $2$. For each \emph{compact} line $L_i$, let $A_i$ be the dual divisor and let $\{B_i\}$ be a set of divisors disjoint from the $L_i$ such that $\{A_i,B_i\}$ is a generating set for $H^2(\cZ,\bQ)$. Let $\phi_i^j$ denote the fundamental class on the $j$th twisted sector in the orbifold cohomology of $L_i$. Then the set $\{A_i,B_i,\phi_i^j\}$ generates $H^2_{CR}(\cZ,\bQ)$.

Now let $\pi:W\rightarrow\cZ$ be the toric crepant resolution of $\cZ$. Then $\pi$ is an isomorphism away from the $L_i$. If $L_i$ is compact, its preimage under $\pi$ consists of $n_i-1$ Hirzebruch surfaces 
\[
H_{i,k}:=\bP(\cO_{\bP^1}\oplus\cO_{\bP^1}(n_im_i+2(n_i-k))) \hspace{1cm} 1\leq k< n_i
\]
where the $\infty$-section of $H_{i,k}$ is glued to the $0$-section of $H_{i,k+1}$. Let $C_i$ be the divisor dual to the $0$ section of $H_{i,1}$ and let $D_{i,k}$ be the divisor dual to the fiber of $H_{i,k}$. If $L_i$ is not compact, then its preimage is a chain of $\bP^1$s with normal bundles $\cO_{\bP^1}\oplus\cO_{\bP^1}(-2)$, in this case we still have divisors $D_{i,k}$ dual to the $\bP^1$s. The set $\{B_i,C_i,D_{i,k}\}$ generates $H^2(W)$ where we have identified $B_i$ with its image under $\pi^*$.

\subsection{Gromov-Witten Theory}

GW invariants are virtual intersection numbers on $\Mbar_{g,k}(\cZ,\beta)$, the Kontsevich moduli stack of genus $g$, $k$ pointed stable maps to $\cZ$ of degree $\beta\in H_2(\cZ,\bZ)$.  Let $\gamma=\{\gamma_1,\dots,\gamma_k\}$ denote a multiset of elements in $\{\phi_i^j\}$ with $m_{i,j}$ entries equal to $\phi_i^j$ and let ${\bf t}$ be a formal parameter in $H^2(\cZ)$. Let $ev_i:\Mbar_{g,k}(\cZ,\beta)\rightarrow\cI\cZ$ denote the evaluation map to the inertia stack.  If $\Mbar_{g,k}(\cZ,\beta)$ is projective, the GW partition function is defined by
\[
GW_\cZ^{\bullet}({\bf x},{\bf t},u,v):=\exp\left(GW_\cZ({\bf x},{\bf t},u,v)\right)
\]
where
\begin{equation}\label{eq:intie}
GW_\cZ({\bf x},{\bf t},u,v):=\sum_{\substack{\beta\neq 0 \\ g,\gamma,n}}\left(\int_{[\Mbar_{g,k+n}(\cX,\beta)]^{vir}}ev^*(\gamma)\frac{ev^*({\bf t})^n}{n!}\right)\frac{{\bf x}^\gamma}{\gamma!}u^{2g-2} v^\beta
\end{equation}
with
\[
ev^*(\gamma)=\prod_{i}ev_{i}^*(\gamma_i)
\]
and
\[
\frac{{\bf x}^\gamma}{\gamma!}:=\prod_{l,k}\frac{x_{i,j}^{m_{i,j}(\gamma)}}{m_{i,j}(\gamma)!}.
\]

We interpret $({\bf x},{\bf t})$ as a formal parameter on $H^2_{CR}(\cZ)$.

By the divisor equation, the dependence on ${\bf t}$ and $v$ is redundant. In particular, they always appear together as a factor $v^\beta e^{(\bf{t},\beta)}$ where $(-,-)$ is the intersection pairing. For this reason, we freely omit the ${\bf t}$-dependence in our discussion of the GW/DT correspondence while we omit the $v$-dependence in the discussion of the CRC.

In the case of interest to us, $\Mbar_{g,k}(\cZ,\beta)$ may not be projective because $\cZ$ is only quasi-projective. In the non-projective case, the GW partition function can still be defined as follows. Consider the full torus $(\bC^*)^3$ acting on $\cZ$. Inside of it, there is a two-dimensional torus $T\cong(\bC^*)^2$ which acts trivially on the canonical bundle. We can lift the $T$-action to $\Mbar_{g,k}(\cZ,\beta)$ and compute
\begin{equation}\label{eq:fixie}
\sum_{F}\int_{[F]^{vir}}\frac{\iota_F^*\left(ev^*(\zeta)ev^*({\bf t})^n\right)}{e^{eq}\left(\cN_F\right)}\in \bQ(u_1,u_2)
\end{equation}
where the sum is over the fixed loci $\iota_F:F\hookrightarrow \Mbar_{g,k+n}(\cZ,\beta)^{T}$, the denominator is the equivariant Euler class of the normal bundle, and $H^*(\cB T)=\bQ[u_1,u_2]$. Choosing a subtorus $\bC^*\subset T$, we can restrict \eqref{eq:fixie} to $H^*(\cB\bC^*)=\bQ[u]$. Since the insertions all have degree two, the end result does not depend on $u$, i.e. it is a rational number. To define the GW partition function in the non-projective case, we replace the integral in \eqref{eq:intie} with this rational number. A priori, one would expect these invariants to depend on the choice of subtorus $\bC^*\subset T$. However, we will see below that the GW invariants are actually independent of this choice.

\subsection{Donaldson-Thomas Theory}

DT invariants are intersection numbers on the Hilbert scheme of substacks in $\cZ$.  Hilbert schemes are indexed by the (compactly supported) $K$ group of coherent sheaves.  Let $\cO_{i,j}$ denote the skyscraper sheaf supported on a generic point of the orbifold line $L_i$ for which $\bZ_{n_i}$ acts by multiplication by $\exp\left( \frac{2\pi\sqrt{-1}}{n_i}j \right)$.  For $\gamma$ and $\beta$ as above, let $Hilb_{\gamma}(\cZ,\beta)$ denote the Hilbert scheme indexed by the class $[\cO_\beta]+\sum_{l,k}m_{l,k}[\cO_{l,k}]\in K(\cZ).$  In \cite{bcy:otv}, orbifold DT invariants are defined via Behrend's constructible function $\nu:Hilb_{\gamma}(\cZ,\beta)\rightarrow \bZ$ \cite{b:dttivmg}.  More precisely, the (multi-regular) DT partition function is defined by
\[
DT_{\cZ,mr}({\bf q},v):=\sum_{\beta,\gamma}e(Hilb_{\gamma}(\cZ,\beta),\nu){\bf q}^\gamma v^\beta
\]
where
\[
e(Hilb_{\gamma}(\cZ,\beta),\nu):=\sum_{k\in\bZ}k\cdot e(\nu^{-1}(k))
\]
with $e(-)$ the topological Euler characteristic.  For our purposes, we will be most interested with the reduced partition function
\[
DT_\cZ({\bf q},v):=\frac{DT_{\cZ,mr}({\bf q},v)}{DT_{\cZ,mr}({\bf q},v=0)}
\]

Notice that $\sum_{k}[\cO_{l,k}]=[\cO_{pt}]$ where $pt$ is a generic (smooth) point on $\cZ$.  For later convenience, we introduce an additional variable $q$ and the relations $\prod_k q_{l,k}=q$ for any $l$.

\subsection{Gromov--Witten/Donaldson--Thomas Correspondence}

Our first result is a GW/DT correspondence in the toric transverse $A$-singularity case.

\begin{Theorem}\label{thm:gwdt}
Let $\cZ$ be a toric Calabi--Yau $3$-orbifold with transverse $A$-singularities. Then, with notation as above,
\[
GW_\cZ^\bullet({\bf x},u,v)=DT_\cZ(\bq,v)
\]
after identifying formal variables by
\[
q\leftrightarrow -e^{\sqrt{-1}u}, \hspace{.5cm} q_{i,j}\leftrightarrow\xi_{n_i}^{-1}e^{-\sum_k\frac{\xi_{n_i}^{-jk}}{n_i}(\xi_{2n_i}^k-\xi_{2n_i}^{-k})x_{i,k}} \hspace{.5cm} (j>0),
\]
\end{Theorem}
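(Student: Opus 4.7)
The plan is to follow the outline sketched in the introduction: reduce the global GW/DT correspondence to a local statement at each torus fixed vertex of $\cZ$, and then prove the resulting vertex correspondence (Theorem \ref{thm:vertexcorrespondence}) by composing three previously established results which together traverse the Bryan--Cadman--Young square of equivalences.

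For the reduction, I would invoke the topological vertex formalism on both sides. On the GW side, the orbifold topological vertex/edge decomposition of Ross--Zong \cite{rz:chialsf} expresses $GW_\cZ^\bullet$ as a sum over torus fixed points of open GW vertex contributions glued along edges by generating series encoding the normal bundles of the compact torus-fixed lines. On the DT side, the orbifold topological vertex of Bryan--Cadman--Young \cite{bcy:otv} expresses $DT_\cZ$ as the analogous sum of box-counting vertex contributions glued by edge factors. The edge contributions already match on the nose under the proposed variable identification (this being an essentially two-leg/edge computation carried out in the references), so the global theorem reduces to the vertex-level equality of Theorem \ref{thm:vertexcorrespondence}.

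To prove the vertex correspondence, let $V \subset W$ denote the chain of vertices in the crepant resolution lying over a given torus fixed vertex of $\cZ$. The strategy is to compose three identifications: (i) the open GW crepant resolution correspondence of Brini--Cavalieri--Ross \cite{bcr:craos}, which identifies the open GW vertex of $\cZ$ with the open GW vertex of $V$ via an explicit affine-linear change of variables in the $x_{i,k}$; (ii) the smooth toric GW/DT correspondence of Maulik--Oblomkov--Okounkov--Pandharipande \cite{moop:gwdtc}, which identifies the GW and DT vertices of $V$ under the substitution $q \leftrightarrow -e^{\sqrt{-1}u}$; and (iii) the DT vertex CRC of the author \cite{r:dtcrc}, which identifies the DT vertex of $V$ with the orbifold DT vertex of $\cZ$ via a roots-of-unity substitution in the $q_{i,j}$. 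Composing these three gives $GW(\text{vertex}_\cZ) = DT(\text{vertex}_\cZ)$, and the exponential $e^{-\sum_k \frac{\xi_{n_i}^{-jk}}{n_i}(\xi_{2n_i}^k - \xi_{2n_i}^{-k})x_{i,k}}$ appearing in the theorem is precisely the substitution that emerges when the three variable changes are composed. Once the vertex equality is established, plugging back into the Ross--Zong/Bryan--Cadman--Young edge--vertex assembly produces the stated global equality.

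The main obstacle I expect is verifying the compatibility of the three distinct variable identifications and conventions --- signs, framings, twisted sector indexing, orientations of the cyclic groups $\bZ_{n_i}$, and the integers $m_i$ recording the splitting type of $\cN_{L_i/\cZ}$ --- used across the three source papers, so that their composition yields exactly the substitution stated in the theorem rather than some twist of it. A secondary technical point is checking that the non-projective GW invariants defined via the $T$-action are independent of the auxiliary choice of subtorus $\bC^* \subset T$; this should follow from the fact that each vertex and edge factor assembles into a rational function whose $u$-dependence cancels in the CY setting, so that the global generating series lies in $\bQ$ rather than $\bQ(u)$.
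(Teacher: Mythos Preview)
Your proposal is correct and follows essentially the same route as the paper: reduce to the vertex statement via Theorem~2.1 of \cite{rz:chialsf}, then prove Theorem~\ref{thm:vertexcorrespondence} by composing the open CRC of \cite{bcr:craos}, the smooth vertex GW/DT of \cite{moop:gwdtc}, and the DT vertex CRC of \cite{r:dtcrc}. The paper's execution of that composition additionally uses the divisor equation to strip off the $t$-dependence, the Diaconescu--Florea gluing \cite{df:lgta} to express the open GW series of $Y$ as a product of smooth $\bC^3$ vertices, and a short combinatorial lemma on $n$-quotients and wreath-product characters to collapse the residual sums---exactly the compatibility bookkeeping you flagged as the main obstacle.
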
 

\begin{proof}
Theorem \ref{thm:gwdt} follows immediately from Theorem \ref{thm:vertexcorrespondence} below and Theorem 2.1 in \cite{rz:chialsf}.
\end{proof}

One consequence of Theorem \ref{thm:gwdt} is that the series $GW({\bf x},u,v)$ does not depend on the choice of torus $\bC^*\subset T$ which was required to define the invariants in the non-projective case.

\subsection{Ruan's Crepant Resolution Conjecture}

Let $GW_W^\bullet({\bf T},u)$ be the GW potential of $W$ defined as above, where ${\bf T}$ is a formal paramter in $H^2(W)$. Write ${\bf t}=(t_{A_i},t_{B_i})$ and ${\bf T}=(T_{B_i}, T_{C_i},T_{D_{i,k}})$ where $t_\bullet$ and $T_\bullet$ are formal variables dual to the bases chosen in Section \ref{sec:bases}. Our second result is a proof of the all-genus CRC for $\cZ$ and $W$.

\begin{Theorem}\label{thm:crc}
Let $\pi:W\rightarrow\cZ$ be a crepant resolution of a toric Calabi-Yau $3$-orbifold with transverse $A$-singularities. Then
\[
GW_W^\bullet({\bf T},u)=GW_\cZ^\bullet({\bf x},{\bf t},u)
\]
where the formal parameters are identified by the affine linear transformation
\begin{align*}
T_{B_i}&\rightarrow t_{B_i}\\
T_{C_i}&\rightarrow t_{A_i}-\sum_{l=1}^{n_i-1}(m_i+2)(n_i-l)\left(\frac{2\pi\sqrt{-1}}{n_i}+\sum_{k=1}^{n_i-1}\frac{\xi_{n_i}^{-lk}}{n_i}(\xi_{2n_i}^k-\xi_{2n_i}^{-k})x_{i,k}\right)\\
T_{D_{i,j}}&\rightarrow \frac{2\pi\sqrt{-1}}{n_i}+\sum_{k=1}^{n_i-1}\frac{\xi_{n_i}^{-jk}}{n_i}(\xi_{2n_i}^k-\xi_{2n_i}^{-k})x_{i,k}
\end{align*}
\end{Theorem}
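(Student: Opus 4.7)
The strategy is exactly the one outlined in the introduction: complete the square of equivalences by composing three known horizontal/vertical equalities. First, I would apply the all-genus GW/DT correspondence of Maulik--Oblomkov--Okounkov--Pandharipande \cite{moop:gwdtc} to the smooth toric CY $3$-fold $W$, obtaining $GW_W^\bullet({\bf T}, u) = DT_W(\bq_W, v_W)$ under the standard GW/DT variable change. Next, I would invoke the DT crepant resolution correspondence of \cite{r:dtcrc} to identify $DT_W(\bq_W, v_W) = DT_\cZ(\bq, v)$ after the explicit substitution given there. Finally, I would apply Theorem \ref{thm:gwdt} of the present paper to rewrite $DT_\cZ(\bq, v) = GW_\cZ^\bullet({\bf x}, {\bf t}, u, v)$. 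Composing the three substitutions gives an identification of parameters which, after the routine elimination of $v$, I expect to match the claimed affine linear transformation.

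The delicate part of the argument is not the existence of the identification but rather the \emph{explicit} matching of formal variables. I would carry this out by first using the divisor equation on both sides to absorb the curve-class variables $v$ into the divisor variables ${\bf T}$ on $W$ and into $({\bf t}, {\bf x})$ on $\cZ$. On $W$, each Mori generator is paired with the relevant combination of $T_{B_i}$, $T_{C_i}$, $T_{D_{i,k}}$ via intersection with the basis divisors; on $\cZ$, the analogous pairing converts the $v$-dependence into $t_{B_i}$ and $t_{A_i}$, while the orbifold variables $x_{i,k}$ enter through the exponential substitution of Theorem \ref{thm:gwdt} applied to the DT variables $q_{i,j}$. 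The key observation is that $\pi_*$ sends the fiber classes of the Hirzebruch surfaces $H_{i,k}$ and the zero section of $H_{i,1}$ to curve classes on $\cZ$ whose pairings with $A_i$ are determined by the integers $m_i$ and $n_i$, which explains why the coefficient $(m_i+2)(n_i - l)$ appears in the formula for $T_{C_i}$.

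The main obstacle, therefore, is the combinatorial bookkeeping in tracking three successive variable changes simultaneously with a divisor-equation substitution. I would organize this by writing, for each torus-invariant compact curve class $\beta$ on $W$, the pairings $(\beta, B_i)$, $(\beta, C_i)$, $(\beta, D_{i,k})$ explicitly, and then computing $[\pi_*\beta]\in K(\cZ)$ to read off the corresponding $v^{\pi_*\beta}$ and multi-regular DT variables. At that point one matches coefficients of $x_{i,k}$ using the substitution $q_{i,j}\leftrightarrow \xi_{n_i}^{-1}\exp(-\sum_k \frac{\xi_{n_i}^{-jk}}{n_i}(\xi_{2n_i}^k - \xi_{2n_i}^{-k}) x_{i,k})$ from Theorem \ref{thm:gwdt}, and the constant shift $\frac{2\pi\sqrt{-1}}{n_i}$ comes from the normalization $q=-e^{\sqrt{-1}u}$ together with the factor $\xi_{n_i}^{-1}$. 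Once this bookkeeping is done carefully for $T_{D_{i,j}}$, the formula for $T_{C_i}$ follows by summing the fiber contributions over $1\leq l<n_i$ weighted by $(m_i+2)(n_i-l)$, and the identity $T_{B_i}\to t_{B_i}$ is immediate since $\pi$ is an isomorphism away from the orbifold locus and the $B_i$ were chosen disjoint from it.
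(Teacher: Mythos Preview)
Your proposal is correct and follows exactly the same approach as the paper: the paper's proof is literally just the composition of the smooth GW/DT correspondence of \cite{moop:gwdtc}, the DT CRC of \cite{r:dtcrc}, and Theorem~\ref{thm:gwdt}, with no further details given. In fact you go beyond the paper by sketching how the explicit affine-linear change of variables is extracted from the three successive substitutions via the divisor equation and the intersection numbers on the Hirzebruch surfaces; the paper leaves this bookkeeping entirely to the reader.
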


\begin{proof}
The theorem is the composition of
\begin{itemize}
\item the smooth GW/DT correspondence of Maulik--Oblomkov--Okounkov--Pandharipande \cite{moop:gwdtc},
\item the DT CRC \cite{r:dtcrc}, and
\item Theorem \ref{thm:gwdt}.
\end{itemize}
\end{proof}

\section{Local Theory}\label{sec:local}

\subsection{Notation}

In this section, we define the framed $A_{n-1}$ orbifold vertex in both GW and DT theory. For the reader's convenience we provide a table describing our notation. For a more thorough treatment of the notation, see \cite{rz:ggmv,rz:chialsf}.

\begin{tabular}{c || p{8cm}}

$\xi=\xi_n$ & $\exp\left(\frac{2\pi\sqrt{-1}}{n} \right)$\\
$\bZ_n$ & $\left\langle \xi_n \right\rangle$\\
$\tau,\eta,\rho,\omega$ & partitions: $\tau=(\tau_0\geq\tau_1\geq\dots\geq\tau_{l(\tau)-1}\geq 0)$\\
$|\tau|$ & size of the partition: $\sum \tau_i$\\
$\mu,\nu,\lambda,\sigma$ & $n$-tuples of partitions: $\mu=(\mu^0,\dots,\mu^{n-1})$ \\
$|\mu|$ & size of the $n$-tuple of partitions: $\sum |\mu^i|$\\
$\chi_\lambda$ & irreducible representation of $S_{|\lambda|}\wr \bZ_n$ indexed by $\lambda$\\
$\chi_\lambda(\mu)$ & value of the irreducible representation $\lambda$ on the conjugacy class $\mu$\\
$z_\mu$ & order of the centralizer of any element in the conjugacy class $\mu$\\
$\bar\mu$ & $n$-colored Young diagram with $n$-quotient $\mu$\\
$\zeta=(\zeta_1,\dots,\zeta_m)$ & $m$-tuple of elements in $\bZ_n$\\
$\gamma=(\gamma_1,\dots,\gamma_m)$ & $m$-tuple of \emph{nontrivial} elements in $\bZ_n$\\
$\Mbar_{g,m}(\cB\bZ_n)$ & moduli space of genus $g$ $m$-pointed maps to $\cB\bZ_n$\\
$\Mbar_{g,\zeta}(\cB\bZ_n)$ & component of the moduli space mapping the $i$th marked point to the $\zeta_i$ component of the inertia stack $\cI\cB\bZ_n$\\
$\bE_{\xi^i}$ & $i$-th Hodge bundle on $\Mbar_{g,m}(\cB\bZ_n)$\\
$\lambda_j^{\xi^i}$ & $j$th Chern class of $\bE_{\xi^i}$\\
$\psi_i$ & psi-class on $\Mbar_{g,m}(\cB\bZ_n)$ pulled back from $\Mbar_{g,m}$\\
\end{tabular}

\subsection{GW Vertex}

Let $\cX$ be the orbifold $[\bC^3/\bZ_n]$ where $\bZ_n$ acts on the coordinates with weights $(r_1,r_2,r_3)=(1,-1,0)$. Let $\bC^*$ act on $\cX$ with weights $(w_1,w_2,w_3)$ satisfying the CY condition $w_1+w_2+w_3=0$.

Descendant GW invariants of $\cX$ are defined by fixed-point localization with respect to the $\bC^*$ action. Explicitly, we use the following correlator notation.
\begin{align*}
\left\langle \prod_{i=1}^m \zeta_i\psi^{j_i} \right\rangle^{\cX}_{g,m}&:=\int_{\left[\Mbar_{g,\zeta}(\cX)^{\bC^*}\right]^{vir}}\frac{\prod_{i=1}^m\psi^{j_i}}{e^{eq}\left(\cN_{\Mbar^{\bC^*}/\Mbar}\right)}\\
&=\int_{\Mbar_{g,\zeta}(\cB\bZ_n)}\frac{\Lambda^{1}(w_1)\Lambda^{-1}(w_2)\Lambda^0(w_3)}{\delta(w) w_3}\prod_{i=1}^m\psi^{j_i}
\end{align*}
where 
\[
\Lambda^i(t):=(-1)^{\rk\left(\bE_{\xi^i}\right)}\sum_{j=0}^{\rk\left(\bE_{\xi^i}\right)}(-t)^{\rk\left(\bE_{\xi^i}\right)-j}\lambda_j^{\xi^i}
\]
and $\delta(w)$ is the function which takes value $w_1w_2$ on the connected component parametrizing trivial covers of the source curve and takes value $1$ on all other components.

Given ordinary partitions $\tau^+=(\tau^+_i)$, $\tau^-=(\tau^-_i)$ and an $n$-tuple of partitions $\mu=(\mu_i^j)$, we define a $4$-tuple of integers associated to each part $\kappa$ by:
\[
\Delta(\kappa)=(i(\kappa),m(\kappa),k(\kappa),d(\kappa)):=\begin{cases}
(1,n,\tau_i^+ \text{ mod } n, \tau_i^+) &\text{ if }\kappa=\tau_i^+\\
(2,n,-\tau_i^- \text{ mod }n, \tau_i^-) &\text{ if }\kappa=\tau_i^-\\
(3,1,j,\mu_i^j) &\text{ if }\kappa=\mu_i^j\\
\end{cases}
\]

We define the following generating series which is a formal series in $z_i^+$, $z_i^-$, and $z_i^j$
\[
J_{g,\gamma}^\cX(\tau^+,\tau^-,\mu;w):=\left\langle\prod_{i}\gamma_i\prod_i\frac{k(\tau^+_i)}{z_i^+-\psi_i^+}\prod_i\frac{k(\tau^-_i)}{z_i^--\psi_i^-}\prod_{i,j}\frac{k(\mu^j_i)}{z_i^j-\psi_i^j}\right\rangle^\cX_{g,|\gamma|+l}
\]
where
\[
l:=l(\tau^+)+l(\tau^-)+l(\mu).
\]

We define  the (positively oriented) disk function by
\[
D^n(i,m,k,d;w):=\left(\frac{dw_1w_2}{mw_i}\right)^{\delta_{0,k}}\frac{mw_3}{d\left\lfloor\frac{d}{m}\right\rfloor!}\frac{\Gamma\left( \frac{dw_{i+1}}{mw_i}+\left\langle\frac{-kr_{i+2}}{n}\right\rangle+\frac{d}{m} \right)}{\Gamma\left( \frac{dw_{i+1}}{mw_i}-\left\langle\frac{-kr_{i+1}}{n}\right\rangle+1 \right)}
\]
where $w_4:=w_1$, $w_5:=w_2$, and similar for the $r_i$.

The particular Hodge integrals we are interested in take the form
\[
V^\cX_{g,\gamma}(\tau^+,\tau^-,\mu;w):=\frac{J_{g,\gamma}^\cX(\tau^+,\tau^-,\mu;w)\prod_\kappa D^n(\Delta(\kappa);w)}{|\aut(\tau^+)||\aut(\tau^-)||\aut(\mu)|}\Bigg|_{{z_i^+=\frac{nw_1}{\tau_i^+} \atop z_i^-=\frac{nw_2}{\tau_i^-}} \atop z_i^j=\frac{w_3}{\mu_i^j}}
\]

Introduce formal variables $u$ and $x_i$ to track genus and marks.  We define
\begin{equation*}
V^\cX_{\tau^+,\tau^-,\mu}(x,u;w):=\sum_{g,\gamma}V_{g,\gamma}^\cX(\tau^+,\tau^-,\mu;w)u^{2g-2+l}\frac{x^\gamma}{\gamma!}.
\end{equation*}

Also introduce the variables $p_\tau^+$, $p_\tau^-$, $p_\mu:=\prod_j p_{\mu^j}^j$ with formal multiplication defined by concatenating indexing partitions whenever the superscripts agree.  We denote the disconnected vertex by
\begin{equation*}
V_{\tau^+,\tau^-,\mu}^{\cX,\bullet}(x,u;w):=\exp\left( \sum_{(\eta^+,\eta^-,\nu)\neq (\emptyset,\emptyset,\emptyset) }V_{\eta^+,\eta^-,\nu}^\cX(x,u;w)p_{\eta^+}^+p_{\eta^-}^- p_{\nu}\right)\left[ p_{\tau^+}^+p_{\tau^-}^- p_{\mu}\right]
\end{equation*}
where  $[-]$ denotes ``the coefficient of''.  By definition, $V_{\tau^+,\tau^-,\mu}^{\cX,\bullet}(x,u;w)$ is the GW $A_{n-1}$ vertex defined in \cite{r:lgoa}.  For our current purposes, it is more convenient to work with a slight modification.

\begin{definition}\label{framedvertex}
The \emph{framed GW $A_{n-1}$ vertex} is defined by
\begin{align*}
\tilde{V}_{\tau^+,\tau^-,\mu}^{\cX,\bullet}(x,u;w):=&(-1)^{|\mu|+\sum_i\left(\left\lfloor-\frac{\tau^-_i}{n}\right\rfloor\right)}\\
&\cdot\sqrt{-1}^{-l(\tau^+)-l(\tau^-)}\prod_{k=0}^{n-1}(\sqrt{-1}\xi_{2n}^k)^{l_k(\mu)}V_{\tau^+,\tau^-,\mu}^{\cX,\bullet}(x,u;w)
\end{align*}
\end{definition}

\subsection{DT Vertex}

Let $\bq=(q_0,\dots,q_{n-1})$ be formal variables with indices computed modulo $n$.  Define the variables $\fq_i$ recursively by $\fq_0:=1$ and
\[
\fq_t:=q_t\fq_{t-1}
\]
so that $(\dots,\fq_{-2},\fq_{-1},\fq_0,\fq_1,\fq_2,\dots)=(\dots,q_{-1}^{-1}q_{0}^{-1},q_{0}^{-1},1,q_1,q_1q_2,\dots)$.

For a colored Young diagram $\bar\lambda$ corresponding to a $n$-partition $\lambda$ via $n$-quotients, denote the sizes of the rows in $\bar\lambda$ by $(\bar\lambda_0\geq\bar\lambda_1\geq\bar\lambda_2\geq\dots)$.  Define the variables $\fq_{\bullet-\lambda}$ by
\[
\fq_{\bullet-\bar\lambda}:=\{\fq_{-\bar\lambda_0}, \fq_{1-\bar\lambda_1}, \fq_{2-\bar\lambda_2},\dots\}.
\]
In particular, we denote $\fq_{\bullet-\emptyset}=\fq_\bullet$.  An overline on an expression in the $q$ variables denotes interchanging $q_i\leftrightarrow q_{-i}$.

We define
\begin{align*}
P^\cX_{\rho^+,\rho^-,\lambda}:=s_{\bar\lambda}(\bq)\sum_\omega q_0^{-|\omega|}\overline{s_{\rho^+/\omega}(\fq_{\bullet-\bar\lambda})}s_{\rho^-/\omega}(\fq_{\bullet-\bar\lambda'})
\end{align*}
where $s_{\bar\lambda}(\bq)$ denotes the loop Schur function of $\bar\lambda$ in the variables $(q_0,\dots,q_{n-1})$ and $s_{\rho/\omega}$ denotes a skew Schur function.

We modify $P$ to incorporate the framing.

\begin{definition}\label{def:dt}
The \emph{framed DT $A_{n-1}$ vertex} is defined by
\begin{align*}
\tilde P^\cX_{\rho^+,\rho^-,\lambda}(w):&=(-1)^{|\lambda|}q^{\frac{|\lambda|}{2}}\frac{\chi_{\bar\lambda}(n^{|\lambda|})}{\dim(\lambda)}\left( \left(-\xi_{2n}\right)^{-|\lambda|} \prod_k \xi_n^{-k|\lambda^k|} \right)^{\frac{nw_1}{w_3}}\\
&\cdot \left( -q^{\frac{1}{2}}q_1^{-\frac{1}{n}}\cdots q_{n-1}^{-\frac{n-1}{n}}\right)^{|\rho^+|}\left(-q^{\frac{1}{2}}q_1^{-\frac{n-1}{n}}\cdots q_{n-1}^{-\frac{1}{n}} \right)^{|\rho^-|}\\
&\cdot\left(\prod_{(i,j)\in\rho^+}q^{i-j} \right)^{\frac{w_3}{nw_1}}\left(\prod_{(i,j)\in\rho^-}q^{i-j} \right)^{\frac{w_3}{nw_2}}\left(\prod_{(i,j)\in\bar\lambda}q_{j-i}^{i-j} \right)^{\frac{w_1}{w_3}} P^\cX_{\rho^+,\rho^-,\lambda}
\end{align*}
\end{definition}

\section{Vertex GW/DT Correspondence}\label{sec:vertexgwdt}

Stated as Conjecture 2.1 in \cite{rz:chialsf}, the building block for all of our results is the following correspondence.

\begin{Theorem}\label{thm:vertexcorrespondence}
After the identification of variables
\[
q\leftrightarrow e^{\sqrt{-1}u}, \hspace{.5cm} q_j\leftrightarrow\xi_n^{-1}e^{-\sum_k\frac{\xi_n^{-jk}}{n}(\xi_{2n}^k-\xi_{2n}^{-k})x_k} \hspace{.5cm} (j>0),
\]
we have an identification of framed vertex theories:
\[
\tilde V_{\tau^+,\tau^-,\mu}^{\cX,\bullet}(w)=\sum_{\rho^+,\rho^-,\lambda}\tilde P^\cX_{\rho^+,\rho^-,\lambda}(w)\frac{\chi_{\rho^+}(\tau^+)}{z_{\tau^+}}\frac{\chi_{\rho^-}(\tau^-)}{z_{\tau^-}}\frac{\chi_\lambda(\mu)}{z_\mu}.
\]
\end{Theorem}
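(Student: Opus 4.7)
The plan is to prove Theorem \ref{thm:vertexcorrespondence} by traversing the other three sides of the GW/DT/CRC square at the level of the $A_{n-1}$ vertex. Let $Y$ denote the toric crepant resolution of $\cX=[\bC^3/\bZ_n]$: concretely, $Y$ is the minimal resolution of the $A_{n-1}$ surface singularity crossed with $\bC$, with exceptional locus a chain of $n-1$ copies of $\bP^1$. I propose to compose three previously established identifications: (i) the open CRC of Brini--Cavalieri--Ross \cite{bcr:craos}, which identifies the framed orbifold GW vertex $\tilde V^{\cX,\bullet}_{\tau^+,\tau^-,\mu}$ with an appropriate combination of framed smooth GW vertices attached to the torus fixed points of $Y$; (ii) the smooth GW/DT topological vertex correspondence of Maulik--Oblomkov--Okounkov--Pandharipande \cite{moop:gwdtc}, which converts each smooth GW vertex of $Y$ into a smooth DT vertex under $-e^{\sqrt{-1}u}\leftrightarrow q$, with Burnside-type character ratios converting from the conjugacy-class basis (natural for GW insertions) to the partition basis on the edges; and (iii) the DT vertex CRC of \cite{r:dtcrc}, which assembles the smooth DT vertices of $Y$ into the framed orbifold DT vertex $\tilde P^{\cX}_{\rho^+,\rho^-,\lambda}$ via the $n$-quotient bijection and loop Schur functions.

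The execution then proceeds in two stages. First I would align the three sources to a common normalization: the framing factors in Definitions \ref{framedvertex} and \ref{def:dt} are tailored so that the composition telescopes without leftover prefactors, but this requires a careful comparison of conventions---framing signs, $\psi$-class insertions, disk-factor orientations, and character normalizations---used in each source. Second, starting from $\tilde V^{\cX,\bullet}_{\tau^+,\tau^-,\mu}$, I would successively apply (i), (ii), and (iii). The character ratios $\chi_{\rho^\pm}(\tau^\pm)/z_{\tau^\pm}$ and $\chi_\lambda(\mu)/z_\mu$ emerge in step (ii) when the smooth GW vertex of $Y$ is expanded against the partition basis on the edges; they pass unchanged through step (iii), which only repackages the partition content of each torus fixed point into the loop Schur function $s_{\bar\lambda}(\bq)$ and the skew Schur factors $\overline{s_{\rho^+/\omega}(\fq_{\bullet-\bar\lambda})}\,s_{\rho^-/\omega}(\fq_{\bullet-\bar\lambda'})$ appearing in $P^{\cX}_{\rho^+,\rho^-,\lambda}$.

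The main obstacle is bookkeeping. One must verify that the composition of (a) the affine-linear change of variables relating the open GW parameters of $\cX$ to the K\"ahler parameters of $Y$ in \cite{bcr:craos}, (b) the MOOP substitution $-e^{\sqrt{-1}u}\leftrightarrow q$, and (c) the multiplicative change of variables expressing the exceptional K\"ahler parameters of $Y$ in terms of $q_1,\dots,q_{n-1}$ from \cite{r:dtcrc}, reproduces precisely the identification
\[
q_j\leftrightarrow\xi_n^{-1}\exp\!\left(-\sum_k\frac{\xi_n^{-jk}}{n}(\xi_{2n}^k-\xi_{2n}^{-k})x_k\right)
\]
asserted in the theorem. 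A subsidiary but nontrivial issue is that the framing prefactors in Definitions \ref{framedvertex} and \ref{def:dt}---the signs $(-1)^{|\mu|+\sum_i\lfloor-\tau_i^-/n\rfloor}$, the roots of unity $\sqrt{-1}^{-l(\tau^+)-l(\tau^-)}\prod_k(\sqrt{-1}\xi_{2n}^k)^{l_k(\mu)}$, and their DT analogues---must be matched against the framing conventions implicit in the three source papers, which use superficially different normalizations. Once these alignments are complete the theorem follows by formal composition; no further geometric input is required.
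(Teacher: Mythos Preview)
Your proposal is correct and follows essentially the same route as the paper: compose the open CRC of \cite{bcr:craos}, the smooth vertex GW/DT correspondence of \cite{moop:gwdtc} (applied at each fixed point of $Y$ after Diaconescu--Florea gluing), and the DT vertex CRC of \cite{r:dtcrc}, then verify that the framing prefactors and variable identifications line up. The only content in the paper beyond your outline is a short closing lemma recording two combinatorial identities---an $n$-quotient cancellation making a leftover monomial equal to $1$, and the wreath-product character identity $\sum_\nu \delta_\mu(\nu)\prod_j\chi_{\lambda^j}(\nu^j)/z_{\nu^j}=\chi_\lambda(\mu)/z_\mu$---which are what actually collapse the sum over disk distributions $\nu$ into the single ratio $\chi_\lambda(\mu)/z_\mu$ on the third leg, a step your sketch absorbs into ``bookkeeping.''
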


In \cite{moop:gwdtc}, Maulik--Oblomkov--Okounkov--Pandharipande proved the equivalent of Theorem \ref{thm:vertexcorrespondence} in the smooth case $n=1$.

\begin{remark}
We've omitted the $\alpha$ terms from the definitions in \cite{rz:chialsf}. Incorporating the $\alpha$ terms, it is not hard to show that Theorem \ref{thm:vertexcorrespondence} implies Conjecture 2.1 in \cite{rz:chialsf}.
\end{remark}

The proof of Theorem \ref{thm:vertexcorrespondence} occupies the rest of this section. We proceed with the following steps.
\begin{itemize}
\item We begin with a discussion of the equivariant geometry of $Y$ in order to define the open GW potential of $Y$.
\item We apply the open CRC of Brini--Cavalieri--Ross \cite{bcr:craos} to relate the GW orbifold vertex to the open GW theory of $Y$.
\item We apply the smooth GW/DT correspondence of Maulik--Oblomkov--Okounkov--Pandharipande \cite{moop:gwdtc} to relate the open GW theory of $Y$ to an anologous DT series.
\item We apply the DT CRC of the author \cite{r:dtcrc} which relates the previous DT series to the DT orbifold vertex.
\end{itemize}
Pulling together all of these steps and simplifying proves Theorem \ref{thm:vertexcorrespondence}.

\subsection{Equivariant Geometry of $Y$}

Let $Y$ be the toric resolution of $\cX$. Then $Y$ contains a chain of $n-1$ $\bP^1$s, all with normal bundle $\cO\oplus\cO(-2)$. In Figure \ref{fig:webdiagrams}, we've depicted the web diagrams for $\cX$ and $Y$. We equip $Y$ with a $\bC^*$ action compatible with that on $\cX$, the labeling in Figure \ref{fig:webdiagrams} indicates the weights of this action.

\begin{figure}
\begin{tikzpicture}
\begin{scope}[scale=1.5,xshift=5cm]
\draw (0,0) -- (1, .75) -- (2,2) -- (2.1,2.4);
\draw (2.1,3.1) -- (2,3.5) -- (1,4.75) -- (0,5.5);
\draw (1,.75) -- (3.5,.75);
\draw (2,2) -- (3.5,2);
\draw (2,3.5) -- (3.5,3.5);
\draw (1,4.75) -- (3.5,4.75);
\draw (2.1,2.8) node{$\vdots$};
\draw (3,1) node{$w_3$};
\draw (3,2.25) node{$w_3$};
\draw (3,3.75) node{$w_3$};
\draw (3,5) node{$w_3$};
\draw (.1,5) node{$nw_1$};
\draw (.4,4.5) node{$-nw_1-w_3$};
\draw (1.1,3.75) node{$nw_1+w_3$};
\draw (1.2,3.25) node{$-nw_1-2w_3$};
\draw (.1,.5) node{$nw_2$};
\draw (.4,1) node{$-nw_2-w_3$};
\draw (1.1,1.75) node{$nw_2+w_3$};
\draw (1.2,2.25) node{$-nw_2-2w_3$};
\end{scope}
\begin{scope}[scale=1.5,xshift=.5cm]
\draw (1,1.75) -- (2,2.75) -- (1,3.75);
\draw (3,3) node{$w_3$};
\draw (1,3.4) node{$w_1$};
\draw (1,2.1) node{$w_2$};
\end{scope}
\begin{scope}[scale=1.5, very thick ,xshift=.5cm]
\draw (2,2.75) -- (3.5,2.75);
\end{scope}
\end{tikzpicture}
\caption{}\label{fig:webdiagrams}
\end{figure}

Let $D_1,\dots,D_{n-1}\in H_{\bC^*}^*(Y,\bQ)$ denote the equivariant cohomology classes corresponding to (the canonical equivariant lifts of) the divisors in $Y$ which are dual to the $n-1$ $\bP^1$s, labeled from bottom to top with respect to Figure \ref{fig:webdiagrams}. Via the Atiyah-Bott localization isomorphism, let $P_0,\dots,P_{n-1}\in H_{\bC^*}^*(Y,\bQ)$ denote the equivariant cohomology classes corresponding to the fixed points of $Y$.

\subsection{Open GW Theory of $Y$}

As in the case for the orbifold, descendant GW invariants of $Y$ are defined via fixed point localization. Explicitly, for divisor classes $\zeta_1,\dots,\zeta_m\in \{D_1,\dots,D_{n-1}\}$ we define
\[
\left\langle \prod_i\zeta_i\psi^{j_i} \right\rangle_{g,m,\beta}^Y:=\sum_{F\subset\Mbar_{g,m}(Y,\beta)^{\bC^*}} \int_{[F]^{vir}}\frac{ev_i^*(\zeta_i)\psi^{j_i}}{e^{eq}\left(\cN_{F}\right)}
\]

Analogous to the case of $\cX$, we define the following formal series in $z_i^+$, $z_i^+$, and $z_i^j$
\begin{equation*}
J^Y_{g,\zeta,\beta}(\tau^+,\tau^-,\mu;w):=\left\langle \prod_i\zeta_i \prod_i\frac{P_{n-1}}{z_i^+-\psi_i^+}\prod_i\frac{P_{0}}{z_i^--\psi_i^i}\prod_{i,j}\frac{P_{j}}{z_i^j-\psi_i^j} \right\rangle_{g,|\zeta|+l,\beta}^Y
\end{equation*}

Define $w^j:=(-nw_2-(j+1)w_3,nw_2+jw_3,w_3)$ and set $w^+:=w^{n-1}$ and $w^-:=w_0$. For each part $\kappa$ in $\tau^+$, $\tau^-$, or $\mu$, define
\[
\tilde\Delta(\kappa)=(\tilde i(\kappa),\tilde m(\kappa),\tilde k(\kappa),\tilde d(\kappa); w(\kappa)):=\begin{cases}
(1,1,0, \tau_i^+;w^+) &\text{ if }\kappa=\tau_i^+\\
(2,1,0, \tau_i^-;w^-) &\text{ if }\kappa=\tau_i^-\\
(3,1,0,\mu_i^j;w^j) &\text{ if }\kappa=\mu_i^j\\
\end{cases}
\]

Then the open GW invariants of $Y$ are defined by

\[
V^Y_{g,\zeta,\beta}(\tau^+,\tau^-,\mu;w):=\frac{J_{g,\zeta,\beta}^Y(\tau^+,\tau^-,\mu;w)\prod_\kappa D^1(\tilde\Delta(\kappa))}{|\aut(\tau^+)||\aut(\tau^-)||\aut(\mu)|}\Bigg|_{{z_i^+=\frac{nw_1}{\tau_i^+} \atop z_i^-=\frac{nw_2}{\tau_i^-}} \atop z_i^j=\frac{w_3}{\mu_i^j}}
\]

Introduce formal variables $t_i$ to track divisor insertions.  We define
\begin{equation*}
V^Y_{\tau^+,\tau^-,\mu}(t,u;w)_\beta:=\sum_{g,\zeta}V_{g,\zeta,\beta}^Y(\tau^+,\tau^-,\mu;w)u^{2g-2+l}\frac{t^\zeta}{\zeta!}
\end{equation*}
and
\[
V^Y_{\tau^+,\tau^-,\mu}(t,u;w):=\sum_\beta V^Y_{\tau^+,\tau^-,\mu}(t,u;w)_\beta
\]

We denote the disconnected series by
\begin{equation*}
V_{\tau^+,\tau^-,\mu}^{Y,\bullet}(t,u;w):=\exp\left( \sum_{(\eta^+,\eta^-,\nu)\neq (\emptyset,\emptyset,\emptyset) }V_{\eta^+,\eta^-,\nu}^Y(t,u;w)p_{\eta^+}^+p_{\eta^-}^- p_{\nu} \right)\left[ p_{\tau^+}^+p_{\tau^-}^- p_{\mu}\right]
\end{equation*}
and we define the modified series 
\[
\tilde V_{\tau^+,\tau^-,\mu}^{Y,\bullet}(t,u;w):=(-1)^{|\tau^-|}\sqrt{-1}^{l(\mu)-l(\tau^+)-l(\tau^-)}V_{\tau^+,\tau^-,\mu}^{Y,\bullet}(t,u;w)
\]


\subsection{The Open CRC and Divisor Equation}

Carefully unpackaging the main results from \cite{bcr:craos}, we have the following correspondence.

\begin{theorem}[\cite{bcr:craos}, Theorems 4.2 and 4.4]\label{thm:ocrc}
After the identification of variables
\[
t_j \leftrightarrow  -\frac{2\pi\sqrt{-1}}{n}-\sum_k\frac{\xi_n^{-jk}}{n}(\xi_{2n}^k-\xi_{2n}^{-k})x_k
\]
we have an identification of formal series:
\[
\tilde{V}_{\tau^+,\tau^-,\mu}^{\cX,\bullet}(w)=\sum_{\nu}\frac{\delta_\mu(\nu)}{n^{l(\nu)}} \left(\prod_k(-1)^{(n-k-1)|\nu_k|}(\xi_{2n}^{-1}\xi_n^{-k})^{|\nu_k|\frac{nw_1}{w_3}}\right)\frac{\tilde V_{\tau^+,\tau^-,\nu}^{Y,\bullet}(w)}{\tilde V_{\emptyset,\emptyset,\emptyset}^{Y,\bullet}(w)}
\]
where
\[
\delta_{\mu}(\nu):=\frac{|\aut(\nu)|}{|\aut(\mu)|}\prod_{j=0}^{n-1}\prod_{i=0}^{l_j(\mu)}\left( \sum_{k=0}^{n-1} \xi_{n}^{-jk}p_{\mu_i^j}^k \right)[p_\nu]
\]
\end{theorem}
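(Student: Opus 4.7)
\textbf{Plan for the proof of Theorem \ref{thm:ocrc}.} The plan is to prove this open crepant resolution identity by reducing both sides to explicit Hodge-integral generating functions and then matching them through a character-theoretic identity on the wreath product $S_d\wr\bZ_n$. Both $\tilde V^{\cX,\bullet}$ and $\tilde V^{Y,\bullet}$ are, by construction, sums of descendant Hodge integrals on $\Mbar_{g,m}(\cB\bZ_n)$ and $\Mbar_{g,m}$ respectively, with fixed ``leg profiles'' $\tau^+,\tau^-,\mu$ or $\nu$ that record monodromy/winding at torus-fixed punctures. The first step is to apply virtual $\bC^*$-localization to $\tilde V^{Y,\bullet}$: the fixed loci in $\Mbar_{g,\zeta}(Y,\beta)^{\bC^*}$ are indexed by assignments of ordinary partitions to the internal $\bP^1$'s of $Y$, and each fixed locus contributes a product of smooth three-legged ``topological vertices'' glued by edge factors that are rational functions of $u$. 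Since each of these vertex integrals can be evaluated by the Mari\~no--Vafa formula of Liu--Liu--Zhou, $\tilde V^{Y,\bullet}_{\tau^+,\tau^-,\nu}$ is expressible, modulo the universal vacuum factor $\tilde V^{Y,\bullet}_{\emptyset,\emptyset,\emptyset}$, as an explicit sum of products of ordinary Schur functions in the variables $q=e^{\sqrt{-1}u}$ and $q_1,\ldots,q_{n-1}$ coming from the K\"ahler parameters $t_j$ along the chain.

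The second step is to rewrite the left-hand side in comparable combinatorial form using the orbifold Mari\~no--Vafa formula of Ross--Zong \cite{rz:ggmv}, which evaluates $\tilde V^{\cX,\bullet}_{\tau^+,\tau^-,\mu}$ as a sum indexed by irreducible representations $\chi_{\bar\lambda}$ of $S_{|\mu|}\wr\bZ_n$, with coefficients built from loop Schur functions $s_{\bar\lambda}(\bq)$. Now the map $\mu\mapsto\nu$ in the statement is fundamentally the discrete Fourier transform on $\bZ_n$: the generating coefficient $\delta_\mu(\nu)$ factors as $\prod_{i,j}\bigl(\sum_k\xi_n^{-jk}p_{\mu_i^j}^k\bigr)$, which is exactly the change of basis converting the ``twisted-sector power-sum basis'' $\{p_{\mu^j}^j\}$ into the ``colored power-sum basis'' $\{p_{\nu_k}^k\}$ diagonalizing the $\bZ_n$-action. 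Under this transform, loop Schur functions $s_{\bar\lambda}(\bq)$ decompose into products of ordinary Schur functions indexed by the $n$-quotient $(\lambda^0,\ldots,\lambda^{n-1})$, and the character values $\chi_{\bar\lambda}(\mu)$ become ordinary symmetric group characters $\chi_{\lambda^k}(\nu_k)$ weighted by roots of unity. This is precisely the combinatorial content that must match the localization output of step one.

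The third step is to calibrate the prefactors. The exponential change of variables $t_j \leftrightarrow -\frac{2\pi\sqrt{-1}}{n}-\sum_k\frac{\xi_n^{-jk}}{n}(\xi_{2n}^k-\xi_{2n}^{-k})x_k$ is forced, via the closed CRC of Coates--Corti--Iritani--Tseng for the hard Lefschetz case, by matching the twisted-sector Chen--Ruan divisor variables $x_k$ with the K\"ahler variables $t_j$ and requiring compatibility with the divisor equation; the shift by $-2\pi\sqrt{-1}/n$ encodes analytic continuation around the orbifold point. The framing prefactor $(-1)^{(n-k-1)|\nu_k|}(\xi_{2n}^{-1}\xi_n^{-k})^{|\nu_k|\frac{nw_1}{w_3}}$ is traced to the comparison of the equivariant CY framing $\cO\oplus\cO(-2)$ at each internal vertex of $Y$ against the single framed vertex of $\cX$ with CY weights $(nw_1,nw_2,w_3)$; the appearance of $\xi_{2n}$ (a $2n$-th root of unity) reflects the square roots in the disk function $D^n$ and in the framed vertex normalization of Definition \ref{framedvertex}. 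The overall sign $(-1)^{|\tau^-|}\sqrt{-1}^{l(\mu)-l(\tau^+)-l(\tau^-)}$ in $\tilde V^{Y,\bullet}$ and the matching sign in $\tilde V^{\cX,\bullet}$ are both designed to cancel the orientation factors coming from the equivariant Euler classes of normal bundles at the legs.

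The main obstacle is the exact bookkeeping in step three: producing the specific roots-of-unity prefactors and verifying that the Fourier-transform coefficient $\delta_\mu(\nu)/n^{l(\nu)}$ is precisely what is needed, since both the loop Schur decomposition and the vertex framing produce competing phases that must conspire to yield the stated formula. I would pin these factors down by first verifying the one-leg case $(\tau^+=\tau^-=\emptyset,\ \mu)$, where both sides reduce to completely explicit Schur/loop-Schur identities, and then propagating to the three-leg case using the TQFT-like gluing compatibility of topological vertices along torus-fixed $\bP^1$'s (which is preserved by both sides of the identity because both theories respect gluing through the same hook-length propagator). Once the one-leg case is checked, the multi-leg statement follows from the gluing structure together with the observation that the variable change, being linear in $x_k$, does not interact with the combinatorial gluing.
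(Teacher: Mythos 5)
There is a genuine problem here, and it is worth noting first that the paper itself offers no proof of Theorem \ref{thm:ocrc}: the statement is imported, after ``careful unpackaging,'' from Theorems 4.2 and 4.4 of \cite{bcr:craos}, where it is established by comparing disk potentials and Givental-type data of $\cX$ and $Y$ under analytic continuation (together with the all-genus input of \cite{z:crcag}), not by evaluating either side in closed form. Your proposal instead tries to prove the identity by computing both sides explicitly in Schur/loop-Schur form and matching, and this is where it breaks down. In step two you invoke ``the orbifold Mari\~no--Vafa formula of Ross--Zong \cite{rz:ggmv}'' to write $\tilde V^{\cX,\bullet}_{\tau^+,\tau^-,\mu}$ as a representation sum with loop Schur coefficients. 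But \cite{rz:ggmv} treats only the one-leg vertex ($\tau^+=\tau^-=\emptyset$) and \cite{rz:chialsf} the two-leg case; a closed-form evaluation of the full three-leg orbifold vertex is precisely the content of Theorem \ref{thm:vertexcorrespondence}, which this paper \emph{deduces from} Theorem \ref{thm:ocrc}. Your argument is therefore circular at the level of the available inputs. The same issue appears on the resolution side: the three-legged smooth $\bC^3$ vertex is not evaluated by the Mari\~no--Vafa formula of Liu--Liu--Zhou (which covers one and two legs); its Schur-function form is the full topological vertex theorem of \cite{moop:gwdtc}.

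The fallback you propose --- verify the one-leg case and ``propagate to the three-leg case using the TQFT-like gluing compatibility of topological vertices'' --- does not close this gap. Gluing in the sense of Diaconescu--Florea (Theorem \ref{thm:gwglue}) composes \emph{distinct} torus-fixed points of a global toric geometry along compact edges; it cannot decompose the three legs meeting at the single fixed point of $[\bC^3/\bZ_n]$ into one-leg data. There is no degeneration or TQFT move that generates the joint $(\tau^+,\tau^-,\mu)$-dependence of a single vertex from its one-leg specializations --- indeed, if such a reduction existed, the three-leg orbifold vertex would not be the open problem this paper is devoted to. Your structural observations about $\delta_\mu(\nu)$ being a discrete Fourier transform compatible with $n$-quotients, and about the prefactors arising from framing and disk-function normalizations, are accurate and do illuminate the shape of the formula; but as a proof the proposal rests on an evaluation of the left-hand side that is not available without already knowing the theorem (and its downstream consequences).
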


\begin{remark}
Notice that $\delta_\mu(\nu)=0$ whenever $\underline{\mu}\neq\underline{\nu}$. In the language of \cite{bcr:craos}, the sum over $\nu$ is simply a way of distributing disks of winding $\mu_i^j$ over the $n$ horizontal legs of the resolution, each with the appropriate factor dictated by Theorem 4.2 of \cite{bcr:craos}.
\end{remark}

\begin{remark}
We need to divide by $\tilde V_{\emptyset,\emptyset,\emptyset}^{Y,\bullet}(w)$ on the right hand side because this corresponds, by a theorem of Zhou \cite{z:crcag}, to the closed GW theory of $\cX$ which we've removed by hand in the definitions.
\end{remark}

We can use the Atiyah-Bott localization isomorphism to write $D_i$ in terms of the idempotent fixed point basis:
\[
D_i\rightarrow -\sum_{j<i}(n-i)w_2P_j-\sum_{j\geq i}jw_1P_j
\]

This allows us to apply the divisor equation to extract the $t$-dependence of our generating series:
\begin{align*}
&\tilde V_{\tau^+,\tau^-,\nu}^{Y,\bullet}(t,u;w)_\beta=\tilde V_{\tau^+,\tau^-,\nu}^{Y,\bullet}(0,u;w)_\beta\\
&\cdot\exp\left(\sum t_i\left(\beta_i -\frac{i}{n}|\tau^+|-\frac{n-i}{n}|\tau^-|-\sum_{j<i}(n-i)\frac{w_2}{w_3}|\nu^j|-\sum_{j\geq i}i\frac{w_1}{w_3}|\nu^j| \right) \right)
\end{align*}
where $\beta_i:=\int_\beta D_i$.

If we define the variables $v_i:=e^{t_i}$, then we have
\begin{align}\label{eqn:divisor}
&\tilde V_{\tau^+,\tau^-,\nu}^{Y,\bullet}(t,u;w)=\tilde V_{\tau^+,\tau^-,\nu}^{Y,\bullet}(u;v;w)\\
&\nonumber\cdot\left(v_1^{-\frac{1}{n}}\cdots v_{n-1}^{-\frac{n-1}{n}}  \right)^{|\tau^+|}\left(v_1^{-\frac{n-1}{n}}\cdots v_{n-1}^{-\frac{1}{n}}  \right)^{|\tau^-|}\prod_j\left(\prod_{i\leq j}v_i^{i\frac{w_1}{w_3}}\prod_{i>j}v_i^{(n-i)\frac{w_2}{w_3}}  \right)^{|\nu_j|}
\end{align}
where
\[
\tilde V_{\tau^+,\tau^-,\nu}^{Y,\bullet}(u;v;w):=\sum_\beta \tilde V_{\tau^+,\tau^-,\nu}^{Y,\bullet}(0,u;w)_\beta v^\beta
\]

Theorem \ref{thm:ocrc} along with equation \eqref{eqn:divisor} provide us with an expression of $\tilde{V}_{\tau^+,\tau^-,\mu}^{\cX,\bullet}(x,u;w)$ in terms of $\tilde V_{\tau^+,\tau^-,\nu}^{Y,\bullet}(u;v;w)$ where formal parameters are identified by 
\[
v_j\leftrightarrow\xi_n^{-1}e^{-\sum_k\frac{\xi_n^{-jk}}{n}(\xi_{2n}^k-\xi_{2n}^{-k})x_k}
\]

\subsection{Smooth GW/DT Correspondence}

By the gluing algorithm of Diaconescu--Florea \cite{df:lgta} (see also \cite{r:lgoa}), we have the following.
\begin{theorem}[\cite{df:lgta}, Section 4] \label{thm:gwglue}
\[
\tilde V_{\tau^+,\tau^-,\nu}^{Y,\bullet}(u;v;w)=\sum_{\tau^i}\prod_{j=1}^{n-1}v_j^{|\tau_j|}z_{\tau^j}\prod_{j=0}^n \tilde V^{\bC^3,\bullet}_{\tau^{j+1},\tau^{j},\nu^{j}}(u;w^j)
\]
where $\tau^0:=\tau^-$ and $\tau^n:=\tau^+$. 
\end{theorem}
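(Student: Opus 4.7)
The plan is to apply virtual Atiyah--Bott localization to $\Mbar_{g,m}(Y,\beta)$ with respect to the $\bC^*$-action depicted in Figure \ref{fig:webdiagrams}. A $\bC^*$-fixed stable map to $Y$ decomposes as contracted source components sitting over the fixed points $P_0,\dots,P_{n-1}$, joined along fully-ramified multiple covers of the $n-1$ compact edges $\bP^1_j$ (and along the two noncompact ends $\tau^+$, $\tau^-$ and the horizontal rays carrying the $\nu^j$ winding data). Thus the fixed loci are indexed by choices of edge partitions $\tau^1,\dots,\tau^{n-1}$, which will become the summation variables.

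First, I would identify the vertex contributions. At the fixed point $P_j$ the three tangent weights on $Y$ can be read directly off Figure \ref{fig:webdiagrams} and coincide with the triple $w^j=(-nw_2-(j+1)w_3,\, nw_2+jw_3,\, w_3)$; the two ``internal'' tangent directions correspond to the $\bP^1$s adjacent to $P_j$ with ramification profiles $\tau^{j+1}$ (above) and $\tau^{j}$ (below), while the horizontal direction carries the profile $\nu^{j}$. The localization residue at this vertex is precisely the Hodge integral defining $V^{\bC^3,\bullet}_{\tau^{j+1},\tau^j,\nu^j}(u;w^j)$; indeed, the factor $D^1(\tilde\Delta(\kappa))$ built into the definition of $V^Y$ is exactly the localization weight of the outgoing formal $\bP^1$ on each ray carrying $\kappa$, so that after stripping edges what remains is $n$ copies of the standard $\bC^3$ vertex in the framing $w^j$.

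Second, I would compute the edge contributions. A degree-$d$ $\bC^*$-fixed cover of the $j$th $\bP^1$ (with normal bundle $\cO\oplus\cO(-2)$) is rigid, its moduli space is a single point with automorphism group $\bZ_d$, and its Euler-class contribution is standard: summed over covers with fixed ramification profile $\tau^j$, one gets the factor $z_{\tau^j}$, while the degree parameter contributes $v_j^{|\tau^j|}$ by the divisor equation \eqref{eqn:divisor}. This is the content of Diaconescu--Florea's edge evaluation \cite{df:lgta}; a version adapted to the $A_n$ framing is worked out in \cite{r:lgoa}, and this is what I would quote for the explicit form of the edge term.

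Third, I would combine the vertex and edge pieces to obtain
\[
V^{Y,\bullet}_{\tau^+,\tau^-,\nu}(u;v;w)=\sum_{\tau^1,\dots,\tau^{n-1}}\prod_{j=1}^{n-1}v_j^{|\tau^j|}z_{\tau^j}\prod_{j=0}^{n-1}V^{\bC^3,\bullet}_{\tau^{j+1},\tau^j,\nu^j}(u;w^j),
\]
using the boundary conventions $\tau^0=\tau^-$, $\tau^n=\tau^+$. The final step is to verify that the prefactor $(-1)^{|\tau^-|}\sqrt{-1}^{l(\mu)-l(\tau^+)-l(\tau^-)}$ in the definition of $\tilde V^{Y,\bullet}$ is exactly accounted for by the product of the framing prefactors in the $n$ smooth vertex factors $\tilde V^{\bC^3,\bullet}_{\tau^{j+1},\tau^j,\nu^j}$, after telescoping: the signs attached to each internal edge partition $\tau^j$ cancel in pairs between consecutive vertices, leaving only the boundary contributions from $\tau^\pm$ and the outgoing $\nu^j$. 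The main obstacle is this last bookkeeping of signs and $\sqrt{-1}$-powers at each edge; the rest of the argument is an instance of the standard topological vertex gluing, already in the literature.
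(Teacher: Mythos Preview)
The paper does not actually prove this statement: it is quoted verbatim as the gluing algorithm of Diaconescu--Florea \cite{df:lgta} (with a pointer to \cite{r:lgoa}), and no argument is given beyond the citation. Your sketch is precisely the localization-and-gluing argument that underlies that cited result, so in spirit you are reconstructing the same proof rather than offering an alternative route.

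A couple of small points. First, you correctly write the vertex product as $\prod_{j=0}^{n-1}$, matching the $n$ fixed points $P_0,\dots,P_{n-1}$; the upper limit $n$ in the displayed statement appears to be a typo (otherwise $\tau^{n+1}$ would be needed). Second, the factor $v_j^{|\tau^j|}$ does not come from the divisor equation here: in the series $\tilde V^{Y,\bullet}(u;v;w)$ the $v_j$ are Novikov variables recording the degree $\beta$, so $v_j^{|\tau^j|}$ simply records that the total degree on the $j$th compact curve is $|\tau^j|$. The divisor equation was used earlier, in passing from the $t$-variables to the $v$-variables in \eqref{eqn:divisor}, which is a separate step. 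Aside from this minor mislabeling, your outline is sound and matches the literature proof that the paper is invoking.
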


Moreover, the vertex correspondence of Maulik--Oblomkov--Okounkov--Pandharipande \cite{moop:gwdtc} can be rephrased in our notation as the following.
\begin{theorem}[\cite{moop:gwdtc}, Proposition 2]\label{thm:sgwdt}
After the change of variables $q=e^{\sqrt{-1}u}$,
\[
\tilde V^{\bC^3,\bullet}_{\tau^{j+1},\tau^{j},\nu^{j}}(u;w^j)=\sum_{\rho^{j+1},\rho^j,\lambda^j}\tilde P^{\bC^3}_{\rho^{j+1},\rho^j,\lambda^j}(q;w^j)\frac{\chi_{\rho^{j+1}}(\tau^{j+1})}{z_{\tau^{j+1}}}\frac{\chi_{\rho^j}(\tau^j)}{z_{\tau^j}}\frac{\chi_{\lambda^j}(\nu^j)}{z_{\nu^j}}.
\]
\end{theorem}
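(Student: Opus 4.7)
The plan is to recognize Theorem \ref{thm:sgwdt} as a direct rephrasing of Proposition 2 of \cite{moop:gwdtc}, with the only substantive work being a character-theoretic change of basis on each of the three legs of the vertex together with the matching of framing conventions.

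First, I would recall MOOP's vertex correspondence in its original form: for $\bC^3$ equipped with the equivariant $\bC^*$ action of weights $w^j$, the equivariant GW vertex indexed by an irreducible-character basis (i.e., by three partitions viewed as representations) equals, after $q = e^{\sqrt{-1}u}$, the corresponding normalized equivariant DT vertex. Denoting the latter by $\tilde P^{\bC^3}_{\rho^{j+1},\rho^j,\lambda^j}(q;w^j)$, this gives an identification of the representation-basis GW vertex with $\tilde P^{\bC^3}$, once normalizations are matched.

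Next, I would convert from the representation basis to the conjugacy-class basis in which $\tilde V^{\bC^3,\bullet}_{\tau^{j+1},\tau^j,\nu^j}$ is naturally defined. The conjugacy-class (power-sum) insertions $p_\tau$ on each leg are related to the representation (Schur) insertions $s_\rho$ by the Frobenius formula $p_\tau = \sum_\rho \chi_\rho(\tau) s_\rho$, whose inverse is $s_\rho = \sum_\tau \chi_\rho(\tau) p_\tau / z_\tau$. Applying this inversion independently on all three legs and substituting into MOOP's identity turns the representation-basis equality into the triple sum appearing on the right side of Theorem \ref{thm:sgwdt}.

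The main (and essentially only) obstacle is the bookkeeping required to verify that the sign and framing normalizations built into Definitions \ref{framedvertex} and \ref{def:dt} agree with MOOP's conventions at $n=1$. In this specialization, the loop Schur function reduces to the ordinary Schur function and all $\xi_{2n}$-factors trivialize, leaving only the standard topological-vertex normalizations $(-1)^{|\lambda|} q^{|\lambda|/2}$ together with the leg-dependent framings $\prod_{(i,j)} q^{i-j}$ weighted by the exponents $w_3/(nw_1)$, $w_3/(nw_2)$, and $w_1/w_3$ at the equivariant weight triple $w = w^j$. Checking that these match MOOP's choices — and that the signs $(-1)^{|\tau^-|}\sqrt{-1}^{l(\mu)-l(\tau^+)-l(\tau^-)}$ appearing in $\tilde V^{\bC^3,\bullet}$ correctly translate the topological-vertex sign conventions — is a direct but somewhat delicate calculation, after which Theorem \ref{thm:sgwdt} follows immediately.
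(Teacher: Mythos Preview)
Your proposal is correct and, if anything, more detailed than what the paper actually does. The paper offers no proof of this statement at all: it simply writes ``the vertex correspondence of Maulik--Oblomkov--Okounkov--Pandharipande \cite{moop:gwdtc} can be rephrased in our notation as the following'' and then states the theorem with the citation \cite{moop:gwdtc}, Proposition 2, in the header. So the paper's ``proof'' is the bare assertion that this is MOOP's result in the present conventions; your outline of the Frobenius change of basis on each leg together with the $n=1$ specialization of Definitions~\ref{framedvertex} and~\ref{def:dt} is exactly the translation the paper leaves implicit.
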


If we define
\[
\tilde P_{\rho^+,\rho^-,\lambda}^{Y}(q;v;w)=\sum_{\rho^i}\prod_{j=1}^{n-1}v_j^{|\rho_j|}\prod_{j=0}^n \tilde P^{\bC^3}_{\rho^{j+1},\rho^{j},\lambda^{j}}(q;w^j)
\]
with $\rho^0:=\rho^-$ and $\rho^n:=\rho^+$, then Theorems \ref{thm:gwglue} and \ref{thm:sgwdt} imply that
\begin{equation}\label{eqn:gwdt}
\tilde V_{\tau^+,\tau^-,\nu}^{Y,\bullet}(u;v;w)=\sum_{\rho^+,\rho^-,\lambda}\tilde P^{Y}_{\rho^+,\rho^-,\lambda}(q;v;w)\frac{\chi_{\rho^+}(\tau^+)}{z_{\tau^+}}\frac{\chi_{\rho^-}(\tau^-)}{z_{\tau^-}}\prod_j \frac{\chi_{\lambda^j}(\nu^j)}{z_{\nu^j}}
\end{equation}

\subsection{DT CRC}

The vertex DT CRC can be rephrased in our notation as follows.

\begin{theorem}[\cite{r:dtcrc}, Theorem 3.1]\label{thm:dtcrc}
After the change of variables $v_i\leftrightarrow q_i$, 
\begin{align*}
&\frac{\tilde P^{Y}_{\rho^+,\rho^-,\lambda}(q;v;w)}{\tilde P^{Y}_{\emptyset,\emptyset,\emptyset}(q;v;w)}=\tilde P^{\cX}_{\rho^+,\rho^-,\lambda}(\bq;w)\\
&\cdot\frac{\prod_k\left(\xi_{2n}\xi_n^{k}\right)^{|\lambda^k|\frac{w_1}{w_3}}\prod_{(i,j)\in\lambda^k}(-1)^{n-k-1}q^{(i-j)\frac{nw_1}{w_3}}\prod_{l>k}q^{l-n}}{\left(q_1^{-\frac{1}{n}}\cdots q_{n-1}^{-\frac{n-1}{n}}\right)^{|\rho^+|}\left(q_1^{-\frac{n-1}{n}}\cdots q_{n-1}^{-\frac{1}{n}}\right)^{|\rho^-|}\left(\prod_{(i,j)\in\bar\lambda}q_{j-i}^{i-j}\right)^{\frac{w_1}{w_3}}}
\end{align*}
\end{theorem}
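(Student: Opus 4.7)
The plan is to establish this crepant-resolution identity by expanding both sides as explicit Schur-function expressions and matching them via the $n$-quotient bijection between colored and ordinary Young diagrams. Both sides are entirely combinatorial: the left hand side is built from the smooth topological vertex on $Y$ via the gluing formula, while the right hand side is built from loop and skew Schur functions on $\cX$ directly from Definitions \ref{framedvertex} and \ref{def:dt}. The proof is therefore essentially a chain of Schur identities, supplemented by careful bookkeeping of the explicit framing prefactors.

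I would first unpack $\tilde P^{Y}_{\rho^+,\rho^-,\lambda}(q;v;w)$ using its defining gluing formula, obtaining a weighted sum over internal partitions $\rho^1,\dots,\rho^{n-1}$ of a product $\prod_{j=0}^{n-1}\tilde P^{\bC^3}_{\rho^{j+1},\rho^{j},\lambda^{j}}(q;w^j)$ of smooth topological vertices. Each smooth factor admits the Okounkov--Reshetikhin--Vafa closed form as $s_{\lambda^j}(q^{-\delta})$ times $q^{\kappa_{\lambda^j}/2}$ times $\sum_\omega s_{\rho^{j+1}/\omega}(q^{-\lambda^j-\delta})\,s_{(\rho^j)^t/\omega^t}(q^{-(\lambda^j)^t-\delta})$, modulo an explicit framing monomial determined by $w^j$. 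The repeated sum over the internal $\rho^j$ can be collapsed using the skew Cauchy identity $\sum_\omega s_{\alpha/\omega}(x)\,s_{\beta/\omega}(y) = \prod(1-x_i y_j)^{-1}\sum_\omega s_{\beta/\omega}(x)\,s_{\alpha/\omega}(y)$, applied recursively along the chain of $\bP^1$'s in $Y$. The resulting infinite-product prefactors match exactly the vacuum series $\tilde P^{Y}_{\emptyset,\emptyset,\emptyset}(q;v;w)$ appearing in the denominator, and hence cancel upon dividing.

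After this cancellation, what remains depends only on $\rho^\pm$, $\lambda=(\lambda^0,\ldots,\lambda^{n-1})$, and $q, v_1,\ldots,v_{n-1}$. Under $v_i\leftrightarrow q_i$, I would identify the surviving Schur content with $s_{\bar\lambda}(\bq)\sum_\omega q_0^{-|\omega|}\overline{s_{\rho^+/\omega}(\fq_{\bullet-\bar\lambda})}\,s_{\rho^-/\omega}(\fq_{\bullet-\bar\lambda'})$, i.e.\ with the unframed orbifold vertex $P^{\cX}_{\rho^+,\rho^-,\lambda}$. The key combinatorial input is the classical identification of the loop Schur function $s_{\bar\lambda}(\bq)$ with $\prod_{k=0}^{n-1}s_{\lambda^k}(q^{-\delta})$ weighted by a monomial in $(q_0,\dots,q_{n-1})$ determined by the $n$-quotient of $\bar\lambda$. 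Together with the principal specialization of the remaining skew Schur sums at $\fq_{\bullet-\bar\lambda}$ and $\fq_{\bullet-\bar\lambda'}$, this completes the reduction up to a framing prefactor.

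The main obstacle is the final step: matching the explicit framing prefactor on the right hand side, namely the signs $(-1)^{n-k-1}$, the roots-of-unity $(\xi_{2n}\xi_n^{k})^{|\lambda^k|\frac{w_1}{w_3}}$, and the monomials $\prod_{(i,j)\in\lambda^k}q^{(i-j)\frac{nw_1}{w_3}}\prod_{l>k}q^{l-n}$ versus $\prod_{(i,j)\in\bar\lambda}q_{j-i}^{i-j}$. These track how the equivariant framings on each smooth vertex factor, labeled by the weights $w^j=(-nw_2-(j+1)w_3,\,nw_2+jw_3,\,w_3)$, rearrange under the $n$-quotient bijection into the single orbifold framing around the fixed point of $\cX$. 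The CY condition $w_1+w_2+w_3=0$ is essential in order to collapse the various $w$-linear exponents produced by the smooth framings into the cleaner monomials $q^{\bullet w_1/w_3}$ appearing on the orbifold side; additionally, the displacement rule in the $n$-quotient bijection, which sends a box $(i,j)\in\lambda^k$ of color $k$ to a box of $\bar\lambda$ at a position whose $q_{\bullet}$-weight depends on the coloring of the arm and leg, must be verified to produce exactly the monomials stated. This bookkeeping is not conceptually deep but is where all of the numerical content of the theorem lives.
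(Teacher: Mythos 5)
The paper offers no proof of this statement: it is imported, after a change of notation, from Theorem 3.1 of \cite{r:dtcrc}, so there is no internal argument to measure your attempt against. What you have written is a reconstruction of the proof that lives in that reference, and in outline it is the correct strategy: put each smooth factor $\tilde P^{\bC^3}_{\rho^{j+1},\rho^j,\lambda^j}(q;w^j)$ in Okounkov--Reshetikhin--Vafa Schur-function form, collapse the sums over the internal edge partitions $\rho^1,\dots,\rho^{n-1}$ with skew Cauchy identities, and match the outcome against the loop Schur function expression for $P^{\cX}_{\rho^+,\rho^-,\lambda}$ through the $n$-quotient correspondence $\lambda\leftrightarrow\bar\lambda$, with the Calabi--Yau condition $w_1+w_2+w_3=0$ used to organize the framing exponents.

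Two steps are substantially weaker than you present them. First, the infinite products produced by the Cauchy identities do \emph{not} ``match exactly the vacuum series and hence cancel'': the variables entering those products are specializations shifted by the leg partitions $\lambda^j$, so the ratio by $\tilde P^{Y}_{\emptyset,\emptyset,\emptyset}$ leaves a nontrivial finite correction whenever $\lambda\neq\emptyset$. Identifying that correction, together with the principally specialized ordinary Schur functions $s_{\lambda^k}$ and their framing monomials, with the loop Schur function $s_{\bar\lambda}(\bq)$ under $v_i\leftrightarrow q_i$ is precisely the main technical theorem of \cite{r:dtcrc} (resting on the loop Schur function results of \cite{rz:ggmv,rz:chialsf}); it is not a ``classical identification,'' and your proposal in effect assumes it. Second, you defer the entire framing prefactor --- the signs $(-1)^{n-k-1}$, the factors $\left(\xi_{2n}\xi_n^{k}\right)^{|\lambda^k|w_1/w_3}$ and $\prod_{l>k}q^{l-n}$, and the monomial $\prod_{(i,j)\in\bar\lambda}q_{j-i}^{i-j}$ --- to ``bookkeeping''; since the theorem is an exact identity of such prefactors, that deferral is where most of its numerical content sits. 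So: right architecture, but the two decisive computations are not carried out, and one of them is the heart of the cited result rather than a routine verification.
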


\subsection{Final Computations}

In order to prove Theorem \ref{thm:vertexcorrespondence}, we combine Theorem \ref{thm:ocrc}, equations \eqref{eqn:divisor} and \eqref{eqn:gwdt}, and Theorem \ref{thm:dtcrc} in order to write $\tilde{V}_{\tau^+,\tau^-,\mu}^{\cX,\bullet}(w)$ in terms of $\tilde V_{\tau^+,\tau^-,\nu}^{Y,\bullet}(w)$ after the change of variables. Explicitly, after making the obvious cancellations, we have:
\begin{align*}
\tilde{V}_{\tau^+,\tau^-,\mu}^{\cX,\bullet}(w)&=\sum_{\nu}\delta_\mu(\nu) \sum_{\rho^+,\rho^-,\lambda}\tilde P^{\cX}_{\rho^+,\rho^-,\lambda}(w)\frac{\chi_{\rho^+}(\tau^+)}{z_{\tau^+}}\frac{\chi_{\rho^-}(\tau^-)}{z_{\tau^-}}\prod_j \frac{\chi_{\lambda^j}(\nu^j)}{z_{\nu^j}}\\
&\cdot\left( \left(\prod_{(i,j)\in\lambda^k}q^{n(i-j)}\prod_{i\leq k}v_i^{i}\prod_{i>k}v_i^{i-n}  \right)\prod_{(i,j)\in\bar\lambda}q_{j-i}^{j-i}\right)^{\frac{w_1}{w_3}}
\end{align*}

Theorem \ref{thm:vertexcorrespondence} follows now from the following facts.

\begin{lemma}
With notation as above,
\begin{enumerate}
\item 
\[
\left(\prod_{(i,j)\in\lambda^k}q^{n(i-j)}\prod_{i\leq k}v_i^{i}\prod_{i>k}v_i^{i-n}  \right)\prod_{(i,j)\in\bar\lambda}q_{j-i}^{j-i}=1
\]
\item 
\[
\sum_\nu \delta_\mu(\nu)\sum_\lambda \prod_j\frac{\chi_{\lambda^j}(\nu^j)}{z_{\nu^j}}=\frac{\chi_\lambda(\mu)}{z_\mu}
\]
\end{enumerate}
\end{lemma}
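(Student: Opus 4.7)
The plan is to treat the two identities independently: Part (2) is essentially the Frobenius character formula for the wreath product $S_{|\lambda|} \wr \bZ_n$, while Part (1) is a purely combinatorial identity about the $n$-quotient correspondence.

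For Part (2), the approach is to invoke the Frobenius-type character formula for wreath products. Introducing independent variable sets $y^{(c)}$ indexed by $c \in \bZ_n$ and the twisted power sums $P^{(j)}_r := \sum_c \xi_n^{-jc} p_r(y^{(c)})$, this formula expresses $\chi_\lambda(\mu)/z_\mu$ as the coefficient of $\prod_c p_{\mu^c}(y^{(c)})$ in the product $\prod_j s_{\lambda^j}(P^{(j)})$. Expanding each Schur factor via $s_{\lambda^j} = \sum_{\nu^j} \frac{\chi_{\lambda^j}(\nu^j)}{z_{\nu^j}} p_{\nu^j}$ and comparing with the defining expression for $\delta_\mu(\nu)$ reduces the desired identity to a symmetry relation between the two coefficient extractions
\[
|\aut(\mu)|\cdot[\textstyle\prod_c p_{\mu^c}(y^{(c)})]\prod_{j,i}\sum_k \xi^{-jk}p_{\nu^j_i}(y^{(k)}) = |\aut(\nu)|\cdot[p_\nu]\prod_{j,i}\sum_k \xi^{-jk}p_{\mu^j_i}^{(k)}.
\]
This can be verified by interpreting both sides as weighted counts of bijections between the parts of $\nu$ and the parts of $\mu$ that preserve part size, where each bijection is weighted by a product of $\xi^{-jk}$-factors depending on the source and target colors. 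Commutativity of multiplication in the exponent yields the symmetry.

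For Part (1), the strategy is to use the $n$-abacus description of the $n$-quotient. Under the bijection between boxes of $\lambda^k$ and $n$-ribbons in $\bar\lambda$, the content $j-i$ of a box in $\lambda^k$ together with its color index $k$ determine the sequence of contents mod $n$ of the $n$ cells in the corresponding ribbon. Consequently, the product $\prod_{(i,j)\in\bar\lambda} q_{j-i}^{j-i}$ can be reorganized by ribbon: each ribbon associated to a box of $\lambda^k$ at content $c$ contributes a factor of the form $q^{n c'} \cdot\prod_i q_i^{e(i,k)}$ (for explicit integer exponents $c'$ and $e(i,k)$), where the $q^{nc'}$ piece matches $q^{n(j-i)}$ from the $\lambda^k$-product and the $q_i^{e(i,k)}$ pieces bundle up, after the substitution $v_i \leftrightarrow q_i$, into the boundary factors $\prod_{i\leq k} v_i^i \prod_{i>k} v_i^{i-n}$. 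The relation $\prod_c q_c = q$ then ensures exact cancellation.

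The main obstacle will be Part (1), as the $n$-quotient bookkeeping is prone to sign and shift ambiguities depending on the conventions used for the abacus charge and for the reading direction of ribbons. The cleanest path is likely an induction on $|\bar\lambda|$: verify the identity for the empty $n$-core, then show that attaching a single $n$-ribbon (equivalently, adding a single box to some $\lambda^k$) multiplies both sides by the same factor. Part (2), by contrast, is essentially a formal consequence of Frobenius reciprocity once the bijection argument is set up.
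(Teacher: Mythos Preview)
Your proposal is correct and aligns with the paper's own treatment: both parts are dispatched as routine combinatorics, Part~(1) via the $n$-quotient/abacus correspondence and Part~(2) via the character theory of the wreath product $S_{|\lambda|}\wr\bZ_n$. The paper's proof is terser than yours---it simply cites Section~3.2 of \cite{r:dtcrc} for the $n$-quotient identity and equation~(6--3) of \cite{rz:ggmv} (the infinite-wedge expression for generalized symmetric group characters) for Part~(2)---whereas you give a self-contained outline.

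The only substantive difference is in Part~(2): the paper invokes the fermionic/infinite-wedge realization of $\chi_\lambda$, while you use the equivalent symmetric-function Frobenius formula and reduce to a bijective symmetry between the two coefficient extractions. Both routes are standard and short; yours has the advantage of being self-contained, at the cost of having to track the $|\aut|$ and $n^{l(\nu)}$ normalizations carefully (note that the $n^{-l(\nu)}$ present in Theorem~\ref{thm:ocrc} is absorbed into the passage from $\prod_j z_{\nu^j}$ to the wreath-product centralizer order $z_\mu$, so your bijection argument must account for this). Your ribbon-induction plan for Part~(1) is exactly the content of the cited discussion in \cite{r:dtcrc} and is the natural way to carry out the ``exercise.''
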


\begin{proof}
The first identity is an exercise in $n$-quotients. In particular, it follows easily from the discussion in Section 3.2 of \cite{r:dtcrc}.

The second identity follows from the infinite wedge expression for the characters of the generalized symmetric group. In particular, it is easily proved from equation (6--3) in \cite{rz:ggmv}.
\end{proof}

\bibliographystyle{alpha}

\end{document}